\author{Felix Goldberg}
\address{Caesarea-Rothschild Institute, University of Haifa, Haifa, Israel}
\email{felix.goldberg@gmail.com}
\title[Lowers bounds for the principal eigenvector of a graph]{A lower bound on the entries of the principal eigenvector of a graph}
\date{March 6, 2014}
\newtheorem{thm}{Theorem}[]
\newtheorem{lem}[thm]{Lemma}
\newtheorem{assume}[thm]{Assumption}
\theoremstyle{example}
\newtheoremstyle{example_contd}
{\topsep} {\topsep}%
{\upshape}
{}
{\bfseries\scshape}
{.}
{1em}
{\thmname{#1} \thmnumber{ #2}\thmnote{#3} (continued)}
\theoremstyle{example_contd}
\begin{document}

\begin{abstract}
We obtain a lower bound on each entry of the principal eigenvector of a non-regular connected graph.
\end{abstract}

\subjclass{05C50,15A42,15A18}

\keywords{adjacency matrix, Perron vector, principal eigenvector, induced subgraph}

\thanks{{This research was supported by the Israel Science Foundation (grant number 862/10.)}}

\maketitle

\section{Introduction}
The theory of graph spectra, in whose earliest annals we find such illustrious names as Hoffman, Bose, Seidel, and Fiedler, has by now attained a fairly mature stage. Recent expositions of the theory may be found in the books \cite{Spectra_BH,Spectra_CRS,AGT2,Spectra_Mieghem}.

On the other hand, while the theory of graph eigenvectors may be already out of its infancy, it is still very much in a state of toddlerhood. The purpose of the present note is to make a modest contribution to one of the basic problems of this theory - the description of the entries of the principal eigenvector of a non-regular graph.

\section{The problem}
Let $G$ be a connected graph on $n$ vertices with adjacency matrix $A \in \mathbb{R}^{n \times n}$. The following facts are widely known (and may be found in each of the references mentioned above):

\begin{itemize}
\item
$A$ is an irreducible nonnegative matrix.
\item
The spectral radius $\rho(G)$ of $A$ is a simple eigenvalue.
\item
The eigenvector $x \in \mathbb{R}^{n}$ corresponding to $\rho$ is positive entrywise.
\end{itemize}

We shall refer to $\rho(G)$ as the spectral radius of $G$ and when the context is clear, denote simply $\rho=\rho(G)$. The vector $x$ will be referred to as the \emph{principal eigenvector} of $G$. An alternative name, which we shall not use here, would be the \emph{Perron vector}. 

It is also very well known that:
\begin{itemize}
\item
If $G$ is regular, then all the entries of $x$ are equal.
\end{itemize}

Papendieck and Recht in \cite{PapRech00} were the first to study the problem of estimating the entries of $x$ in the case that $G$ is non-regular. Before presenting their result, we make an assumption which will be sustained throughout the rest of the note:
\begin{assume}
The vector $x$ is normalized so that $\sum_{i=1}^{n}{x_{i}^{2}}=1$.
\end{assume}

\begin{thm}\cite{PapRech00}\label{thm:pap}
Let $G$ be a connected graph with principal eigenvector $x$. Let $x_{\max}$ be the largest entry of $x$. Then
$$
x_{\max} \leq \frac{1}{\sqrt{2}}.
$$
Equality is attained if and only if $G=K_{1,n-1}$ is the star on $n$ vertices.
\end{thm}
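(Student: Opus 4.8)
The plan is to localize the analysis at a vertex where $x$ is maximal and to play the eigenvalue equation off against the normalization $\sum_i x_i^2 = 1$ by means of the Cauchy--Schwarz inequality.

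First I would pick a vertex $v$ with $x_v = x_{\max}$ and write $d$ for its degree. Reading off the eigenvalue equation $Ax = \rho x$ at the coordinate $v$ gives $\rho x_v = \sum_{u \sim v} x_u$. I would bound the right-hand side by Cauchy--Schwarz, $\left(\sum_{u \sim v} x_u\right)^2 \leq d \sum_{u \sim v} x_u^2$, and then observe that $v$ and its neighbours are pairwise distinct vertices, so $x_v^2 + \sum_{u \sim v} x_u^2 \leq \sum_i x_i^2 = 1$. Together these give $\rho^2 x_v^2 \leq d\,(1 - x_v^2)$.

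Second, I need the matching lower bound $\rho^2 \geq d$. This comes from applying the eigenvalue equation twice: $\rho^2 x_v = (A^2 x)_v = \sum_w (A^2)_{vw}\, x_w \geq (A^2)_{vv}\, x_v = d\, x_v$, since $A^2$ is entrywise nonnegative, $x > 0$, and $(A^2)_{vv}$ counts closed walks of length two at $v$, hence equals $d$. Dividing by $x_v > 0$ yields $\rho^2 \geq d$, and combining with the previous step, $d\, x_v^2 \leq \rho^2 x_v^2 \leq d\,(1 - x_v^2)$, so $x_v^2 \leq \frac{1}{2}$, which is the asserted bound.

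Finally, the equality case. If $x_v = 1/\sqrt{2}$ then all the inequalities above are tight; in particular $\rho^2 x_v = d\, x_v$ forces $(A^2)_{vw} = 0$ for every $w \neq v$, i.e.\ $v$ shares no common neighbour with any other vertex, which means every neighbour of $v$ is a pendant vertex. Since $G$ is connected this forces $V(G) = \{v\} \cup N(v)$, so $G = K_{1,n-1}$; and conversely a direct computation shows the principal eigenvector of $K_{1,n-1}$ has central entry exactly $1/\sqrt{2}$. I expect the only mildly delicate part to be the bookkeeping in this last paragraph — checking that the single condition $(A^2)_{vw} = 0$ already pins down the graph, and that the Cauchy--Schwarz step and the normalization inequality are automatically tight for the star — rather than the main estimate, which is short.
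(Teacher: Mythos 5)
Your argument is correct. Note that the paper does not actually prove this statement---it is quoted from Papendieck--Recht without proof---so there is no internal proof to compare against; judged on its own, your proof is complete: the chain $\rho^2 x_v^2=\bigl(\sum_{u\sim v}x_u\bigr)^2\le d_v\sum_{u\sim v}x_u^2\le d_v(1-x_v^2)$ is valid (the normalization step uses that $v\notin N(v)$), the bound $\rho^2\ge d_v$ via $(A^2)_{vv}=d_v$ and positivity of $x$ is sound, and together they give $x_v^2\le 1/2$ (implicitly $n\ge 2$, so $d_v\ge 1$, which the statement assumes anyway since $K_{1,n-1}$ needs $n\ge2$). Your equality analysis is also right, and in fact cleaner than re-examining every inequality in the chain: equality forces $\rho^2=d_v$, which alone (with $x>0$) kills all $(A^2)_{vw}$ for $w\neq v$, so every neighbour of $v$ is pendant and connectivity pins down the star; the converse is the direct eigenvector computation for $K_{1,n-1}$ giving central entry $1/\sqrt2$.

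In terms of route: what you have done is essentially re-derive, at the maximal vertex, the Cioab\u{a}--Gregory bound $x_i\le 1/\sqrt{1+\rho^2/d_i}$ (Theorem \ref{thm:cg} in this paper) and then combine it with the classical inequality $\rho^2\ge d_{\max}$; the original Papendieck--Recht argument is organized differently because it treats all $p$-norms at once and expresses the bound in terms of $\rho$, specializing to $1/\sqrt2$ only at $p=2$. Your version buys a short, self-contained $\ell^2$ proof together with a tidy equality characterization via closed walks, at the cost of not recovering the more general $p$-norm statement mentioned in the paper.
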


In fact, Papendieck and Recht's full result is more general, holding for every $p$-norm ($p \in [1,\infty]$) and depending also on $\rho$. However, in the case of interest to us, $p=2$, it reduces to $\frac{1}{\sqrt{2}}$.

\section{Known bounds on $x$}

Let us introduce some more notation: denote the degree of the $i$th vertex of $G$ by $d_{i}$. The subgraph of $G$ obtained by deleting the $i$th vertex (and all edges incident on it) will be denoted as $G_{(i)}$. The spectral radius of $G_{(i)}$ will be denoted by $\rho_{i}$. Note that since $G$ is connected, we have by \cite[Corollary 2.1.5(b)]{Avi}:
$$
\rho>\rho_{i}.
$$

Cioab\u{a} and Gregory \cite{CioGre07pr} have generalized Theorem \ref{thm:pap} to give upper bounds on every entry of $x$. 

\begin{thm}\cite[Theorem 3.2]{CioGre07pr}\label{thm:cg}
Let $G$ be a connected graph with principal eigenvector $x$. Then for every $1 \leq i \leq n$:
$$
x_{i} \leq \frac{1}{\sqrt{1+\frac{\rho^{2}}{d_{i}}}}.
$$
Equality is attained if and only if $x_{i}=x_{\max}$, $d_{i}=n-1$, and $G_{(i)}$ is regular.
\end{thm}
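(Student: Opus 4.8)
The plan is to play the eigenvalue equation at vertex $i$ against the normalization of $x$ through a single application of the Cauchy--Schwarz inequality. Writing $Ax = \rho x$ coordinatewise at $i$ gives $\rho x_i = \sum_{j \sim i} x_j$, and Cauchy--Schwarz bounds the right-hand side by $\sqrt{d_i}\,\bigl(\sum_{j \sim i} x_j^2\bigr)^{1/2}$. Since the neighbours of $i$ are among the $n-1$ vertices other than $i$ itself, $\sum_{j \sim i} x_j^2 \le \sum_{j \ne i} x_j^2 = 1 - x_i^2$ by the normalizing assumption. Squaring the resulting inequality yields $\rho^2 x_i^2 \le d_i(1 - x_i^2)$, which rearranges to $x_i^2(d_i + \rho^2) \le d_i$, i.e.\ exactly the claimed bound.

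For the equality discussion I would simply record when the two estimates above are tight. Equality in Cauchy--Schwarz forces the entries $x_j$ with $j \sim i$ to be all equal to some common value $t > 0$; equality in $\sum_{j\sim i} x_j^2 \le \sum_{j \ne i} x_j^2$ forces $x_j = 0$ for every $j \ne i$ with $j \not\sim i$, which, as $x$ is entrywise positive, is possible only if $i$ is adjacent to every other vertex, that is, $d_i = n-1$. Hence in the equality case $x$ has the shape $(x_i, t, t, \dots, t)$.

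Finally I would extract the remaining two conditions from this shape. The eigenvalue equation at an arbitrary $j \ne i$ reads $\rho t = x_i + r_j t$, where $r_j$ denotes the degree of $j$ inside $G_{(i)}$; since $x_i$ and $\rho t$ are independent of $j$, all the $r_j$ agree and $G_{(i)}$ is regular. Combining this with the equation $\rho x_i = (n-1)t$ at vertex $i$ gives $x_i = (n-1)t/\rho$, and since $\rho$ is at most the maximum degree of $G$, which here is $n-1$, we obtain $x_i \ge t$, so $x_i = x_{\max}$. For the converse, one checks that when $d_i = n-1$ the graph $G$ is the join of $\{i\}$ with $H := G_{(i)}$, and if moreover $H$ is $r$-regular then the vector $\bigl((n-1)/\mu,\, 1, \dots, 1\bigr)$ with $\mu = \tfrac12\bigl(r + \sqrt{r^2 + 4(n-1)}\bigr)$ is a positive eigenvector, hence proportional to $x$ by uniqueness of the Perron vector; a one-line computation then shows that both Cauchy--Schwarz steps are equalities. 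The only point requiring a little care is the forward implication $x_i = x_{\max}$, where one must invoke the bound $\rho \le n-1$; everything else is just the equality case of Cauchy--Schwarz together with positivity of $x$.
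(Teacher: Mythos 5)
Your proof is correct. Note that the paper itself offers no proof of this statement --- it is quoted from Cioab\u{a} and Gregory --- so there is nothing internal to compare against; your argument (the eigenvalue equation $\rho x_i=\sum_{j\sim i}x_j$, Cauchy--Schwarz, and the normalization $\sum_j x_j^2=1$, followed by tracing through the two equality cases, with the join construction and $\rho\le n-1$ handling the converse and the claim $x_i=x_{\max}$) is essentially the standard one used in the cited source.
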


A natural counterpart to Theorem \ref{thm:cg} is given by Li, Wang, and Van Mieghem \cite{LiWangMie12}:
\begin{thm}\cite{LiWangMie12}\label{thm:mieghem}
Let $G$ be a connected graph with principal eigenvector $x$. Then for every $1 \leq i \leq n$:
$$
x_{i} \geq \sqrt{\frac{\rho-\rho_{i}}{2\rho}}.
$$
\end{thm}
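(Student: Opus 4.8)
The plan is to exploit the eigenvalue equation for $A$ restricted to the neighborhood of vertex $i$, together with a Rayleigh-quotient estimate for the deleted subgraph $G_{(i)}$. Write $A$ in block form with respect to the partition $\{i\} \cup (V \setminus \{i\})$, so that the principal eigenvector splits as $x = (x_i, y)^{\mathsf{T}}$ with $y \in \mathbb{R}^{n-1}$ being the restriction of $x$ to $G_{(i)}$. The eigenvalue equation $Ax = \rho x$ then yields two relations: from the first row, $\rho x_i = \sum_{j \sim i} x_j = b^{\mathsf{T}} y$, where $b$ is the indicator vector of the neighbors of $i$; and from the remaining rows, $A_{(i)} y + x_i b = \rho y$, where $A_{(i)}$ is the adjacency matrix of $G_{(i)}$.

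First I would take the inner product of the second relation with $y$ itself, obtaining $y^{\mathsf{T}} A_{(i)} y + x_i (b^{\mathsf{T}} y) = \rho \, y^{\mathsf{T}} y$. Substituting $b^{\mathsf{T}} y = \rho x_i$ from the first relation gives $y^{\mathsf{T}} A_{(i)} y + \rho x_i^2 = \rho \, y^{\mathsf{T}} y$. Now I would bound the Rayleigh quotient of $y$ for $A_{(i)}$ from above: since $\rho_i = \rho(G_{(i)})$ is the spectral radius of the symmetric matrix $A_{(i)}$, we have $y^{\mathsf{T}} A_{(i)} y \leq \rho_i \, y^{\mathsf{T}} y$. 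Combining, $\rho_i \, y^{\mathsf{T}} y + \rho x_i^2 \geq \rho \, y^{\mathsf{T}} y$, i.e. $\rho x_i^2 \geq (\rho - \rho_i) \, y^{\mathsf{T}} y$.

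Next, using the normalization $x_i^2 + y^{\mathsf{T}} y = 1$, I would substitute $y^{\mathsf{T}} y = 1 - x_i^2$ to get $\rho x_i^2 \geq (\rho - \rho_i)(1 - x_i^2)$. Rearranging, $x_i^2 \big(\rho + (\rho - \rho_i)\big) \geq \rho - \rho_i$, that is $x_i^2 (2\rho - \rho_i) \geq \rho - \rho_i$, so $x_i^2 \geq \frac{\rho - \rho_i}{2\rho - \rho_i}$. This is in fact slightly stronger than the claimed bound, since $2\rho - \rho_i \leq 2\rho$ (as $\rho_i \geq 0$) implies $\frac{\rho - \rho_i}{2\rho - \rho_i} \geq \frac{\rho - \rho_i}{2\rho}$; taking square roots then gives $x_i \geq \sqrt{\frac{\rho - \rho_i}{2\rho}}$, as desired. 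One should check the edge case $G_{(i)}$ having no edges (so $\rho_i = 0$ and $A_{(i)}$ is the zero matrix), where $y^{\mathsf{T}} A_{(i)} y = 0$ and the argument still goes through.

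The main obstacle — really the only delicate point — is justifying the Rayleigh inequality $y^{\mathsf{T}} A_{(i)} y \leq \rho_i \, y^{\mathsf{T}} y$ when $G_{(i)}$ is disconnected: $A_{(i)}$ need not be irreducible, but it is still a real symmetric matrix whose largest eigenvalue equals $\max$ over components of the component spectral radii, which is exactly how $\rho_i = \rho(G_{(i)})$ is understood, so the spectral theorem gives the bound with no connectivity hypothesis needed. Everything else is elementary manipulation, and the only inputs are the eigenvalue equation, the normalization Assumption, and $\rho > \rho_i$ (which guarantees the bound is nonnegative and meaningful).
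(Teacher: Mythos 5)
Your proposal is correct. Note, however, that the paper does not actually prove Theorem \ref{thm:mieghem}: it is quoted from \cite{LiWangMie12} without proof, so there is no internal argument to compare against. What you have written is a valid, self-contained derivation: the block equations $b^{T}y=\rho x_{i}$ and $A_{(i)}y+x_{i}b=\rho y$ are right, the Rayleigh bound $y^{T}A_{(i)}y\leq\rho_{i}\,y^{T}y$ needs only that $A_{(i)}$ is symmetric and nonnegative (so that $\rho_{i}=\lambda_{\max}(A_{(i)})$, no irreducibility required, as you observe), and the normalization then gives $x_{i}^{2}(2\rho-\rho_{i})\geq\rho-\rho_{i}$, which is indeed marginally sharper than the stated bound since $\rho_{i}\geq 0$. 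It is worth remarking that your route uses exactly the same partition \eqref{eq:part} that the paper employs for Theorem \ref{thm:main}, but replaces the Tao--Vu formula of Lemma \ref{lem:taovu} (which requires inverting $\rho I-B$ and uses $\rho>\rho_{i}$ essentially) with an elementary Rayleigh-quotient estimate; the quoted form $x_{i}\geq\sqrt{(\rho-\rho_{i})/(2\rho)}$ then follows by the harmless relaxation $2\rho-\rho_{i}\leq 2\rho$.
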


We remark that additional bounds for $x_{\max}$ and $x_{\min}$ can be found in \cite{CioGre07pr,Nik08}. There are also in the literature results of a different kind where $\sum_{i \in S}{x_{i}^{2}}$ is estimated from above for subsets $S \subseteq V(G)$ which induce either empty \cite{Cio10,LiWangMie12} or, more generally, regular subgraphs \cite{AndCar13}. When $S$ is a singleton set such bounds reduce to an analogue of Theorem \ref{thm:cg}.

\section{A new lower bound}
Our new result is another lower bound on $x_{i}$, which is often, but not always, better than Theorem \ref{thm:mieghem}:
\begin{thm}\label{thm:main}
Let $G$ be a connected graph with principal eigenvector $x$. Then for every $1 \leq i \leq n$:
$$
x_{i} \geq \frac{1}{\sqrt{1+\frac{d_{i}}{(\rho-\rho_{i})^{2}}}}.
$$
\end{thm}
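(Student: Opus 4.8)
The plan is to mimic the structure that underlies both Theorem~\ref{thm:cg} and Theorem~\ref{thm:mieghem}: partition the eigenvalue equation according to the $i$th vertex and its complement, and then control the "off-diagonal" coupling term by the spectral radius $\rho_i$ of $G_{(i)}$. Fix $i$ and, after relabelling, assume $i=n$. Write $x = (y^{\mathsf T}, x_n)^{\mathsf T}$ with $y \in \mathbb{R}^{n-1}$ the restriction of $x$ to $V(G_{(i)})$, and write the adjacency matrix in block form
\begin{equation*}
A \;=\; \begin{pmatrix} B & a \\ a^{\mathsf T} & 0 \end{pmatrix},
\end{equation*}
where $B$ is the adjacency matrix of $G_{(i)}$ and $a \in \{0,1\}^{n-1}$ is the indicator of the neighbours of vertex $i$, so $a^{\mathsf T} a = d_i$. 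The eigenvalue equation $Ax = \rho x$ splits into
\begin{equation*}
By + x_n\, a \;=\; \rho y, \qquad a^{\mathsf T} y \;=\; \rho x_n.
\end{equation*}

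From the first block equation I would solve for $y$: since $\rho > \rho_i \ge \rho(B)$, the matrix $\rho I - B$ is positive definite on $\mathbb{R}^{n-1}$, hence invertible, and
\begin{equation*}
y \;=\; x_n\,(\rho I - B)^{-1} a.
\end{equation*}
Substituting into the second block equation gives the scalar identity $a^{\mathsf T}(\rho I - B)^{-1} a = \rho$ (valid because $x_n \ne 0$). More usefully, I can compute $\|y\|^2 = x_n^2\, a^{\mathsf T}(\rho I - B)^{-2} a$, and since $\|x\|^2 = \|y\|^2 + x_n^2 = 1$ we get
\begin{equation*}
\frac{1}{x_n^2} \;=\; 1 + a^{\mathsf T}(\rho I - B)^{-2} a.
\end{equation*}
So the theorem is equivalent to the clean linear-algebra estimate $a^{\mathsf T}(\rho I - B)^{-2} a \le \dfrac{d_i}{(\rho - \rho_i)^2}$.

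For that last step I would diagonalize $B = \sum_k \mu_k u_k u_k^{\mathsf T}$ with eigenvalues $\mu_k \le \rho(B) \le \rho_i < \rho$; then $a^{\mathsf T}(\rho I - B)^{-2} a = \sum_k (\rho - \mu_k)^{-2} (u_k^{\mathsf T} a)^2$. Since each $\rho - \mu_k \ge \rho - \rho_i > 0$, we have $(\rho-\mu_k)^{-2} \le (\rho - \rho_i)^{-2}$, so the sum is at most $(\rho-\rho_i)^{-2}\sum_k (u_k^{\mathsf T} a)^2 = (\rho-\rho_i)^{-2}\|a\|^2 = d_i/(\rho-\rho_i)^2$, which is exactly what is needed. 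Rearranging $1/x_n^2 \le 1 + d_i/(\rho-\rho_i)^2$ yields the stated bound.

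The only genuine subtlety is making sure $\rho > \rho_i \ge \rho(B)$ so that $\rho I - B$ really is invertible — but this is guaranteed by the connectedness of $G$ via the cited \cite[Corollary 2.1.5(b)]{Avi}, and by the fact that $\rho(B) = \rho(G_{(i)}) = \rho_i$ by definition. One should also note that $B$ is symmetric, so the spectral decomposition and the eigenvalue bound $\mu_k \le \rho(B)$ are legitimate; and that $a \ne 0$ unless vertex $i$ is isolated, which cannot happen in a connected graph on $n \ge 2$ vertices (the case $n=1$ being trivial since then $G$ is regular). I expect essentially no obstacle beyond bookkeeping; the engine is the resolvent identity plus the crude but sharp eigenvalue bound $(\rho-\mu_k)^{-2} \le (\rho-\rho_i)^{-2}$, with equality forcing $G_{(i)}$ to contribute only through its Perron eigenvalue — which suggests the natural equality discussion, should one wish to include it.
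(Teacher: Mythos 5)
Your proof is correct and takes essentially the same route as the paper: the block eigenvalue equations you solve yield exactly the identity $x_i^2 = \frac{1}{1+\|(\rho I-B)^{-1}a\|^2}$ that the paper quotes as the Tao--Vu lemma, and your spectral-decomposition estimate $a^{\mathsf T}(\rho I-B)^{-2}a \le d_i/(\rho-\rho_i)^2$ is just an explicit form of the paper's operator-norm step $\|(\rho I-B)^{-1}\| = 1/(\rho-\rho_i)$. The only difference is that your argument is self-contained rather than citing the lemma (and the aside $a^{\mathsf T}(\rho I-B)^{-1}a=\rho$ is not actually needed).
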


For the proof we need a lemma:
\begin{lem}\cite[p. 148]{TaoVu11}\label{lem:taovu}
Let the Hermitian matrix $A$ be partitioned as
\begin{equation}\label{eq:part}
A=\left[
        \begin{array}{cc}
           a& b^{T}\\
           b& B
        \end{array}
    \right]
\end{equation}
and let $x$ be a unit eigenvector of $A$ corresponding to the eigenvalue $\lambda$. If $\lambda$ is not an eigenvalue of $B$, then
$$
|x_{1}|^{2}=\frac{1}{1+||(\lambda I-B)^{-1}b||^{2}}.
$$
\end{lem}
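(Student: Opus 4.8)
The plan is to exploit the block structure of the eigenvalue equation directly, reading off the tail of the eigenvector in terms of its first entry. Partition the unit eigenvector conformally with $A$ as $x = \left[\begin{smallmatrix} x_1 \\ y \end{smallmatrix}\right]$, where $x_1$ is the first (scalar) entry and $y$ collects the remaining $n-1$ entries. Writing out $Ax = \lambda x$ block by block according to the partition \eqref{eq:part} yields the two relations $a x_1 + b^T y = \lambda x_1$ and $x_1 b + B y = \lambda y$.

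First I would observe that only the second relation is needed. Rearranging it gives $(\lambda I - B) y = x_1 b$. This is exactly where the hypothesis enters: since $\lambda$ is not an eigenvalue of $B$, the matrix $\lambda I - B$ is nonsingular, so it may be inverted to obtain $y = x_1 (\lambda I - B)^{-1} b$. Thus the tail $y$ is pinned down up to the single scalar $x_1$.

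Next I would impose the normalization. Because $x$ is a unit vector, $1 = \|x\|^2 = |x_1|^2 + \|y\|^2$. Substituting the expression for $y$ and pulling the scalar out of the norm gives $\|y\|^2 = |x_1|^2 \, \|(\lambda I - B)^{-1} b\|^2$, whence $1 = |x_1|^2 \bigl(1 + \|(\lambda I - B)^{-1} b\|^2\bigr)$. Dividing through yields the claimed identity. As a byproduct, the right-hand side is strictly positive, which re-confirms that $x_1 \neq 0$, as it must be, since otherwise $y$ would vanish as well and $x$ could not be a unit vector.

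There is no serious obstacle here: the result is an identity rather than an inequality, and the whole argument is a short block computation. The only points demanding care are to invoke the hypothesis $\lambda \notin \sigma(B)$ precisely where the invertibility of $\lambda I - B$ is used, and to keep the norm bookkeeping honest so that $|x_1|^2$ factors cleanly out of $\|y\|^2$. It is worth noting that the first block relation $a x_1 + b^T y = \lambda x_1$ plays no role in establishing the identity and can be set aside entirely.
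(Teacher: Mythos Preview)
Your argument is correct and is exactly the standard derivation of this identity. Note, however, that the paper does not supply its own proof of this lemma: it simply cites \cite[p.~148]{TaoVu11}, so there is nothing in the paper to compare against beyond confirming that your block-eigenvector computation is the intended one.
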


\begin{proof}[Proof of Theorem \ref{thm:main}]
Without loss of generality, let $i=1$ and suppose that $A$ is partitioned as in \eqref{eq:part}. Then $B$ is the adjacency matrix of $G_{(1)}$. 
As observed before: $\rho>\rho_{1}$. This means that $\rho$ is not an eigenvalue of $B$ and the hypothesis of Lemma \ref{lem:taovu} is satisfied. Thus we have
$$
|x_{1}|^{2}=\frac{1}{1+||(\rho I-B)^{-1}b||^{2}} \geq \frac{1}{1+||(\rho I-B)^{-1}||^{2}||b||^{2}},
$$    
where $||(\rho I-B)^{-1}||$ is the $2$-norm, which is known to be equal to $$\lambda_{\max}((\rho I-B)^{-1})=\frac{1}{\lambda_{\min}(\rho I-B)}=\frac{1}{\rho-\lambda_{\max}(B)}=\frac{1}{\rho-\rho_{1}}.$$
Thus, since $||b||^{2}=d_{1}$ we obtain
$$
|x_{1}|^{2} \geq \frac{1}{1+\frac{d_{1}}{(\rho-\rho_{1})^{2}}}.
$$
\end{proof}

\section{An example}
Consider the following graph:
\begin{figure}[h]
\begin{center}
\includegraphics[width=4in]{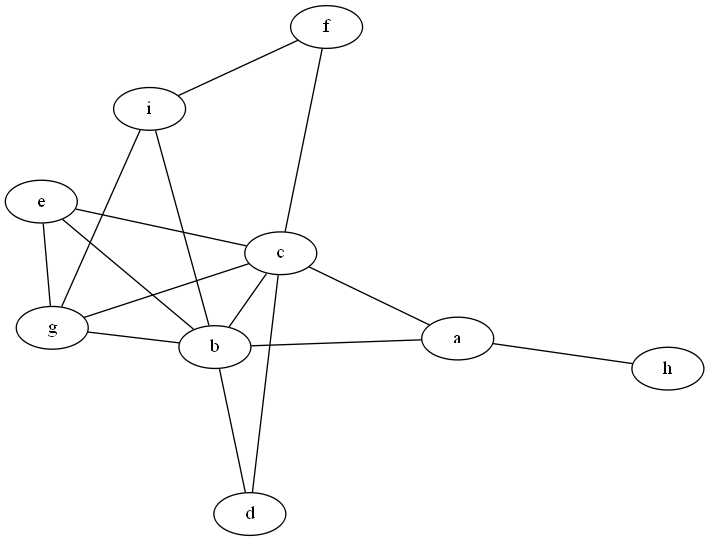} 
\end{center}
\end{figure}

In the table we list the actual values of the principal eigenvector $x$ and the bounds given by all three theorems discussed.

\medskip

\begin{tabular}{|c|l|l|l|l|l|}

\hline

vertex name\ & vertex degree\ & Theorem \ref{thm:mieghem}\ & Theorem \ref{thm:main}\ & $x_{i}$  & Theorem \ref{thm:cg}\ \\ [0.5ex] 

\hline

b& 6&0.39725&0.45901&0.49917&0.5213\\
c&
 6&0.374&0.41636&0.48264&0.5213\\

g& 
4&0.29584&0.33114&0.39818&0.44634\\

a& 

3&0.18076&0.14959&0.26109&0.39654\\

e&

3&0.25233&0.28276&0.34415&0.39654\\

i&3&0.18904&0.16325&0.27064&0.39654\\

d&2&0.17415&0.16949&0.24485&0.33261\\

f&2&0.13045&0.096049&0.18786&0.33261\\

h&1&0.044799&0.016093&0.065114&0.24198\\

\hline
\end{tabular}

\medskip 

As the table makes clear, Theorems \ref{thm:main} and Theorem \ref{thm:mieghem} are, generally speaking, incomparable. Nevertheless, a rule of thumb may be discerned as to when is one better than the other: Theorem \ref{thm:main} works better for vertices of higher degree and Theorem \ref{thm:mieghem} for vertices of low degree. As vertices $a,e,i$ show, however, this rule of thumb is not perfect.

\bibliographystyle{abbrv}
\bibliography{nuim}
\end{document}